\documentclass[conference]{IEEEtran}
\IEEEoverridecommandlockouts
% The preceding line is only needed to identify funding in the first footnote. If that is unneeded, please comment it out.
\usepackage{cite}
\usepackage{amsmath,amssymb,amsfonts,amsthm}
\DeclareMathOperator*{\argmin}{arg\,min}
\usepackage[ruled,vlined,linesnumbered]{algorithm2e}
\usepackage{graphicx}
\usepackage{textcomp}
\def\BibTeX{{\rm B\kern-.05em{\sc i\kern-.025em b}\kern-.08em
    T\kern-.1667em\lower.7ex\hbox{E}\kern-.125emX}}
 
\usepackage{multirow}    
%% macros for editing and commenting
\usepackage[normalem]{ulem} % to use \sout

%% macros for vectors
\newcommand{\va}{{\mathbf{a}}}
\newcommand{\vb}{{\mathbf{b}}}
\newcommand{\vc}{{\mathbf{c}}}
\newcommand{\vd}{{\mathbf{d}}}
\newcommand{\ve}{{\mathbf{e}}}

\newcommand{\vg}{{\mathbf{g}}}

\newcommand{\vp}{{\mathbf{p}}}

\newcommand{\vx}{{\mathbf{x}}}
\newcommand{\vy}{{\mathbf{y}}}
\newcommand{\vz}{{\mathbf{z}}}

%% macros for matrices
\newcommand{\vA}{{\mathbf{A}}}
\newcommand{\vB}{{\mathbf{B}}}

\newcommand{\vD}{{\mathbf{D}}}
\newcommand{\vE}{{\mathbf{E}}}

\newcommand{\vI}{{\mathbf{I}}}

\newcommand{\vL}{{\mathbf{L}}}
\newcommand{\vM}{{\mathbf{M}}}

\newcommand{\vP}{{\mathbf{P}}}
\newcommand{\vQ}{{\mathbf{Q}}}
\newcommand{\vR}{{\mathbf{R}}}

\newcommand{\vX}{{\mathbf{X}}}
\newcommand{\vY}{{\mathbf{Y}}}
\newcommand{\vZ}{{\mathbf{Z}}}

 % expectation
\newcommand{\RR}{\mathbb{R}} % real
 % complex
 % integer
 % symmetric matrix

\newcommand{\vzero}{{\mathbf{0}}}
\newcommand{\vareps}{{\varepsilon}}

\newtheorem{theorem}{Theorem}
\newtheorem{remark}{Remark}

\begin{document}

\title{Greedy coordinate descent method on non-negative quadratic programming
%{\footnotesize \textsuperscript{*}Note: Sub-titles are not captured in Xplore and
%should not be used}
\thanks{This work in partly supported by the NSF grant DMS-1719549.}
}

\author{\IEEEauthorblockN{Chenyu Wu}
\IEEEauthorblockA{\textit{Department of Mathematical Sciences} \\
\textit{Rensselaer Polytechnic Institute}\\
Troy, NY 12180, USA \\
wuc10@rpi.edu}
\and
\IEEEauthorblockN{Yangyang Xu}
\IEEEauthorblockA{\textit{Department of Mathematical Sciences} \\
\textit{Rensselaer Polytechnic Institute}\\
Troy, NY 12180, USA \\
xuy21@rpi.edu}
}

\maketitle

\begin{abstract}
The coordinate descent (CD) method has recently become popular for solving very large-scale problems, partly due to its simple update, low memory requirement, and fast convergence. In this paper, we explore the greedy CD on solving non-negative quadratic programming (NQP). The greedy CD generally has much more expensive per-update complexity than its cyclic and randomized counterparts. However, on the NQP, these three CDs have almost the same per-update cost, while the greedy CD can have significantly faster overall convergence speed. We also apply the proposed greedy CD as a subroutine to solve linearly constrained NQP and the non-negative matrix factorization. Promising numerical results on both problems are observed on instances with synthetic data and also image data.
\end{abstract}

\begin{IEEEkeywords}
greedy coordinate descent, quadratic programming, nonnegative matrix factorization
\end{IEEEkeywords}

\section{Introduction}
The coordinate descent (CD) method is one of the most classic iterative methods for solving optimization problems. It dates back to 1950s \cite{hildreth1957quadratic} and is closely related to the Jacobi and Gauss-Seidel methods for solving a linear system. Compared to a full-update method, such as the gradient descent and the Newton's method, the coordinate update is simpler and cheaper, and also the CD method has lower memory requirement. Partly due to this reason, the CD method and its variants (such as coordinate gradient descent) have recently become particularly popular for solving very large-scale problems, under both convex and nonconvex settings (e.g., see \cite{tseng2009coordinate, nesterov2012efficiency, wright2015coordinate, shi2016primer, xu2013block, xu2015block, xu2017globally, hong2017iteration, dang2015stochastic, gao2019randomized, peng2016arock, razaviyayn2013unified}). Roughly speaking, the CD method, at each iteration, picks one (by a certain rule) out of possibly many coordinates and then updates it (in a certain way) to decrease the objective value. 

Early works (e.g., \cite{hildreth1957quadratic, luo1992convergence, tseng2001convergence}) on CD chose the coordinates cyclicly, or greedily such that the change to the variable or the decrease of the objective value is maximized \cite{chen2012maximum}. Since the pioneering work \cite{nesterov2012efficiency} that introduces random selection of the updated coordinate, many recent works focus on randomized CD methods (e.g., \cite{richtarik2014iteration, lin2014accelerated, liu2015asynchronous, peng2016arock, xu2019asynchronous}). Theoretically, the randomized CD can have faster convergence than the cyclic CD \cite{sun2019worst}. Computationally, the greedy CD is generally more expensive than the randomized and cyclic CD, namely, the latter two can be coordinate-friendly (CF) \cite{peng2016CF} while the greedy one may fail to. However, for some special-structured problems such as the $\ell_1$ minimization, the greedy CD is CF and can be faster than both the randomized and cyclic CD (e.g., \cite{li2009coordinate, peng2013parallel, nutini2015coordinate, nutini2018greed}) in theory and practice.

In this paper, we first explore the greedy CD to the non-negative quadratic programming (NQP). Similar to the cyclic and randomized CD methods, we show that the greedy CD is also CF for solving the NQP, by maintaining the full gradient of the objective and renewing it with $O(1)$ flops after each coordinate-update. Numerically, we demonstrate that the greedy CD can be significantly faster than the cyclic and randomized counterparts. We then apply the greedy CD as a subroutine in the framework of the augmented Lagrangian method (ALM) for the linearly constrained NQP and in the framework of the alternating minimization for the nonnegative matrix factorization (NMF) \cite{paatero1994positive, lee1999learning}. On both problems with synthetic and/or real-world data, we observe promising numerical performance of the proposed methods. 

\noindent\textbf{Notation.} %We use bold lower-case letters $\vx, \vc,\ldots$ for vectors and bold upper-case letters $\vP, \vA, \ldots$ for matrices. The superscript $^\top$ is used for the transpose of a vector or a matrix. 
The $i$-th component of a vector $\vx$ is denoted as $x_i$. $\vx_{<i}$ denotes  the subvector of $\vx$ of all components with indices less than $i$ and $\vx_{>i}$ with indices greater than $i$. Given a matrix $\vP$, we use $\vp_i$ for its $i$-th column and $\vp_{i:}$ for its $i$-th row. For a twice differentiable function $f$ on $\RR^n$, $\nabla_i f$ denotes the partial derivative about the $i$-th variable and $\nabla_i^2 f$ for the second-order partial derivative. %Given a positive integer $n$, 
$[n]$ represents the set $\{1,\ldots, n\}$.

\section{Greedy Coordinate Descent method}\label{sec:gcd-nqp}
We first briefly introduce the greedy CD method on a general coordinate-constrained optimization problem and then show the details on how to apply it to the NQP.

Let $f:\RR^n\to \RR$ be a differentiable function and for each $i\in [n]$, $X_i\subseteq \RR$ be a closed convex set. The greedy CD for solving $\min_{\vx\in\RR^n}\big\{ f(\vx): x_i\in X_i, \forall i\in[n]\big\}$ iteratively performs the update: $x_i^{k+1}=x_i^k$ if $i\neq i_k$, and
\begin{equation}\label{eq:gcd}
x_i^{k+1} = %\left\{
%\begin{array}{ll}
\argmin_{x_i\in X_i} f(\vx_{<i}^k, x_i, \vx_{>i}^k),  \text{ if } i = i_k,%\\[0.1cm]
%x_i^k, & \text{ if } i \neq i_k,
%\end{array}
%\right.
\end{equation}
where $i_k$ is selected greedily by
\begin{equation}\label{eq:select-ik}
\textstyle i_k = \argmin_{i\in [n]} \left\{\min_{x_i\in X_i} f\big(\vx_{<i}^k, x_i, \vx_{>i}^k\big) \right\}.
\end{equation}
Notice that here we follow \cite{chen2012maximum} and greedily choose $i_k$ based on the objective value. In the literature, there are a few other ways to greedily choose $i_k$, based on the magnitude of partial derivatives or the change of coordinate update. We refer the readers to the review paper \cite{shi2016primer}.

In general, to choose $i_k$ by \eqref{eq:select-ik}, we need to solve $n$ one-dimensional minimization problem, and thus the per-update complexity of the greedy CD can be as high as $n$ times of that of a cyclic or randomized CD. Similar to \cite{li2009coordinate} that considers the LASSO, we show that the greedy CD on the NQP can have similar per-update cost as the cyclic and randomized CD. 

\subsection{Greedy CD on the NQP}
Now we derive the details on how to apply the aforementioned greedy CD on the following NQP:
%We started by introducing the Greedy Coordinate Descent method on Non-negative Quadratic Programming.\\
%General formula of Non-negative Quadratic Programming
\begin{equation}\label{eq:nqp}
\textstyle \min_{\mathbf{x}\geq \vzero} \;F(\mathbf{x})= \frac{1}{2}\mathbf{x}^\top \vP \mathbf{x}+ \vd^\top \mathbf{x}.
\end{equation}
Here, $\vP \in \mathbb R^{n\times n}$ is a given symmetric positive semidefinite (PSD) matrix, and %that contains quadratic coefficient (also know as Hessian) of the problem, 
$\vd\in \mathbb R^{n}$ is given. %is the vector of linear coefficient and $x\in \mathbb{R}^n$ is the variable of the problem.
%Here we suppose $P$ is positive semi-defined and 
To have well-defined coordinate updates, we assume $P_{ii}>0,\,\forall\,i\in[n]$. %so that this problem is convex.\\
The work \cite{thoppe2014greedy} has studied a greedy block CD on \emph{unconstrained} QPs, and it requires the matrix $\vP$ to be positive definite. Hence, our setting is more general, and more importantly, our algorithm can be used as a subroutine to solve a larger class of problems such as the linear equality-constrained NQP and the NMF, discussed in sections~\ref{sec:gcd-linqp} and \ref{sec:gcd-nmf}, respectively. We emphasize that our discussion on the greedy CD may be extended to other applications with separable non-smooth regularizers.

Suppose that the value of the $k$-th iterate is $\vx^k$. We define $G^{(k)}_i(x_i)=F(\vx_{<i}^k, x_i, \vx_{>i}^k)$.  
%
%Then in order to apply the Coordinate Descent method, we need to represent the function in coordinate-wise dependent function. \\
%According to 
Since $F$ is a quadratic function, by the Taylor expansion, it holds
\begin{equation}
\textstyle G^{(k)}_i(x_i) = F(\mathbf{x}^k)+\nabla_i F(\mathbf{x}^k)  (x_i-x_i^k)
 + \frac{P_{ii}}{2}(x_i-x_i^k)^2.
 \label{eq:4}
\end{equation}
Let $\hat x_i^k$ be the minimizer of $G^{(k)}_i(x_i)$ over $x_i\ge0$. Then %it is easy to have
\begin{equation}\label{eq:xhat-ik}
\textstyle \hat x_i^k = \max\left(\textstyle 0,\ x_i^k - \frac{\nabla_i F(\mathbf{x}^k)}{P_{ii}}\right), \forall\, i\in [n],
\end{equation}
and thus the best coordinate by the rule in \eqref{eq:select-ik} is 
\begin{equation}\label{eq:ik-nqp}
i_k = \argmin_{i\in [n]}\left\{\textstyle \nabla_i F(\mathbf{x}^k)  (\hat x_i^k-x_i^k)
 + \frac{P_{ii}}{2}(\hat x_i^k-x_i^k)^2\right\}.
 \end{equation}
 
Notice that the most expensive part in \eqref{eq:xhat-ik} lies in computing $\nabla_i F(\mathbf{x}^k)$, which takes $O(n)$ flops. Hence, to obtain the best $i_k$ and thus a new iterate $\vx^{k+1}$, it costs $O(n^2)$. Therefore, performing $n$ coordinate updates will cost $O(n^3)$, which is order of magnitude larger than the per-update cost $O(n^2)$ by the gradient descent method. However, this naive implementation does not exploit the coordinate update, i.e., any two consecutive iterates differ at most at one coordinate. Using this fact, we show below that the greedy CD method on solving \eqref{eq:nqp} can be CF \cite{peng2016CF} by maintaining a full gradient, i.e., $n$ coordinate updates cost similarly as a gradient descent update.

Let $\vg^k=\nabla F(\vx^k)$. Provided that $\vg^k$ is stored in the memory, we only need $O(1)$ flops to have $\hat x_i^k$ defined in \eqref{eq:xhat-ik}, and thus to obtain the best $i_k$, it costs $O(n)$. Furthermore, notice that $\nabla F(\vx)=\vP\vx + \vd$. Hence,
$$\nabla F(\vx^{k+1}) = \nabla F(\vx^k) + \vP(\vx^{k+1}-\vx^k)=\vg^k + (\hat x_{i_k}^k - x_{i_k}^k) \vp_{i_k}.$$
Therefore, to renew $\vg$, it takes an additional $O(n)$ flops. This way, we need $O(n)$ flops to update the iterate and maintain full gradient from $(\vx^k, \vg^k)$ to $(\vx^{k+1}, \vg^{k+1})$, and thus completing $n$ greedy coordinate updates costs $O(n^2)$, which is similar to the cost of a gradient descent update.

\subsection{Stopping Condition}
%To decide when to stop the iteration, we can check the violation of the optimality condition. 
A point $\vx^*\ge\vzero$ is an optimal solution to \eqref{eq:nqp} \emph{if and only if} $\vzero\in \nabla F(\vx^*) + \mathcal{N}_+(\vx^*)$, where $\mathcal{N}_+(\vx)=\big\{\vg\in\RR^n: g_i x_i \le 0,\, \forall\, i\in [n] \big\}$ denotes the normal cone of the non-negative orthant at $\vx\ge \vzero$. %, i.e.,
%$$\mathcal{N}_+(\vx)=\big\{\vg\in\RR^n: g_i x_i \le 0, \forall\, i\in [n] \big\}.$$
Therefore, $\vx^*$ should satisfy the following conditions for each $i\in[n]$:  $\nabla_i F(\vx^*) = 0$ if $x_i^*>0$, and $\nabla_i F(\vx^*) \ge 0$ if $x_i^*=0$. Let
%Since this is an non-negative problem, we will check the optimal condition in two cases. The first cases is when $x_i>0$ its gradient of that coordinate must equal to zero. The second cases is when $x_i=0$ its gradient of that coordinate must less than zero. Then we find out the coordinate that not satisfies these cases by applying following condition.
$
I_0^k = \{i\in [n]: x_i^k = 0\},\ I_+^k = \{i\in [n]: x_i^k > 0\},
$
and %compute
\begin{equation}\label{eq:delta-k}
\textstyle \delta_k = \sqrt{\sum_{i\in I_0^k} \big[\min(0, \nabla_i F(\vx^k))\big]^2 + \sum_{i\in I_+^k} \big[\nabla_i F(\vx^k)\big]^2}.
\end{equation}
Then if $\delta_k$ is smaller than a pre-specified error tolerance, %or the iteration number exceeds a given positive number, 
we can stop the algorithm.

\subsection{Pseudocode of the greedy CD}
Summarizing the above discussions, we have the pseudocode for solving the NQP \eqref{eq:nqp} in Algorithm~\ref{alg:gcd-nqp}.

\iffalse
\begin{algorithm}[h]\caption{Greedy CD for \eqref{eq:nqp}: $\mathrm{GCD}(\vP,\vd, \vareps,\vx^0)$}\label{alg:gcd-nqp}
\begin{algorithmic}
\STATE \textbf{Input}: a PSD matrix $\vP\in\RR^{n\times n}$, $\vd\in\RR^n$, initial point $\vx^0\ge\vzero$, and an error tolerance $\vareps>0$.
\STATE \textbf{Overhead}: let  $k=0$, $\vg^0= \nabla F(\vx^0)$ and set $\delta_0$ by \eqref{eq:delta-k}. %$\leftarrow \max(0,randn(n,1))$
%\STATE Maintain: $D\leftarrow diag(P)$
%\STATE Compute: $y\leftarrow Px+c $
\WHILE{$\delta_k>\vareps$}
\STATE Compute $\hat x_i^k$ for all $i\in [n]$ by \eqref{eq:xhat-ik} with $\nabla_i F(\vx^k)= g_i^k$;
\STATE Find $i_k\in [n]$ by the rule in \eqref{eq:ik-nqp} with $\nabla_i F(\vx^k)= g_i^k$;
\STATE Let $x_i^{k+1}=x_i^k$ for $i\neq i_k$ and $x_i^{k+1}=\hat x_i^k$ for $i=i_k$;
\STATE Update $\vg^{k+1}=\vg^k + (x_{i_k}^{k+1}-x_{i_k}^k)\vp_{i_k}$;
\STATE Increase $k\gets k+1$ and compute $\delta_{k}$ by \eqref{eq:delta-k}.
%\IF {$\delta_{k+1}\le \vareps$ or $k=K$}
%\STATE Output $\vx^{k+1}$ and stop.
% \ENDIF
\ENDWHILE
\STATE \textbf{Output} $\vx^{k}$
\end{algorithmic}
\end{algorithm}
\fi

\begin{algorithm}[h]\caption{Greedy CD for \eqref{eq:nqp}: $\mathrm{GCD}(\vP,\vd, \vareps,\vx^0)$}\label{alg:gcd-nqp}
\DontPrintSemicolon
{\small
\textbf{Input}: a PSD matrix $\vP\in\RR^{n\times n}$, $\vd\in\RR^n$, initial point $\vx^0\ge\vzero$, and an error tolerance $\vareps>0$.\;
\textbf{Overhead}: let  $k=0$, $\vg^0= \nabla F(\vx^0)$ and set $\delta_0$ by \eqref{eq:delta-k}.\; %$\leftarrow \max(0,randn(n,1))$
%\STATE Maintain: $D\leftarrow diag(P)$
%\STATE Compute: $y\leftarrow Px+c $
\While{$\delta_k>\vareps$}{
Compute $\hat x_i^k$ for all $i\in [n]$ by \eqref{eq:xhat-ik};\;
Find $i_k\in [n]$ by the rule in \eqref{eq:ik-nqp};\;
Let $x_i^{k+1}=x_i^k$ for $i\neq i_k$ and $x_i^{k+1}=\hat x_i^k$ for $i=i_k$;\;
Update $\vg^{k+1}=\vg^k + (x_{i_k}^{k+1}-x_{i_k}^k)\vp_{i_k}$;\;
Increase $k\gets k+1$ and compute $\delta_{k}$ by \eqref{eq:delta-k}.
%\IF {$\delta_{k+1}\le \vareps$ or $k=K$}
%\STATE Output $\vx^{k+1}$ and stop.
% \ENDIF
}
\textbf{Output} $\vx^{k}$
}
\end{algorithm}
\vspace{-0.2cm}

\iffalse
Then we use the norm of those coordinate to represent the residual.
\[\text{res} = \sqrt{||\nabla F(\text{case 1})||^2+||\nabla F(\text{case 2})||^2}\]
The optimal solution should have residue close to zero, so we use $res<tol$ while $tol$ close to 0 as stopping condition.

\subsection{Complexity}
For resource-saving, we maintain the $\mathbf{y} = Qx+c$ and $\nabla^2 F(\mathbf{x}) = diag(Q)$, when we computing the all the coordinate. Then during every iteration, we renew the $\mathbf{y}$ once in the following scheme.
\begin{equation}
\begin{split}
\mathbf{y}^{k+1} &= Q\mathbf{x}^{k+1}+c\\
&=\mathbf{y}^{k}+Q_{i}d_{i}
\end{split}
\end{equation}
Here the $Q_{i}$ is the $i^{th}$ column of matrix $Q$, and $d_i$ is same as the definition of $\eqref{eq:1}$.\\
Then we have in each iteration the complexity of renewing the $\mathbf{x},\mathbf{y}$ is $O(n)$. \\
Therefore, this algorithm is coordinate friendly.
\fi

\subsection{Convergence result}
The greedy CD that chooses coordinates based on the objective decrease has been analyzed in \cite{chen2012maximum}. We apply the results there to obtain the convergence of Algorithm~\ref{alg:gcd-nqp}. 

\begin{theorem}
%Assume that $\vP$ is PSD in \eqref{eq:nqp} and $P_{ii}>0$ for each $i\in[n]$. 
Let $\{\vx^k\}$ be the sequence from Algorithm~\ref{alg:gcd-nqp}. Suppose that the lower level set $\mathcal{L}_0=\big\{\vx\ge \vzero: F(\vx)\le F(\vx^0)\big\}$ is compact. Then $F(\vx^k)\to F^*$ as $k\to \infty$, where $F^*$ is the optimal objective value of \eqref{eq:nqp}. 
\end{theorem}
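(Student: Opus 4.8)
The plan is to combine the monotone decrease of the objective along Algorithm~\ref{alg:gcd-nqp} with a compactness/continuity argument, which is exactly the route of \cite{chen2012maximum}; one may either invoke the convergence theorem there after checking its hypotheses, or argue directly as sketched below. Write $D_i^k$ for the objective decrease obtained by moving only coordinate $i$ from $x_i^k$ to its subproblem minimizer $\hat x_i^k$; from the Taylor identity \eqref{eq:4} and a short case analysis ($\hat x_i^k>0$ versus $\hat x_i^k=0$ in \eqref{eq:xhat-ik}) one obtains
\[ D_i^k \ \ge\ \tfrac{P_{ii}}{2}(\hat x_i^k-x_i^k)^2 \ \ge\ 0 . \]
Since \eqref{eq:ik-nqp} picks the index $i_k$ maximizing $D_i^k$ and, by construction, $F(\vx^k)-F(\vx^{k+1})=D_{i_k}^k$, we get $F(\vx^k)-F(\vx^{k+1})\ge D_i^k\ge 0$ for every $i\in[n]$. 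In particular $\{F(\vx^k)\}$ is non-increasing, so all iterates remain in $\mathcal L_0$; compactness of $\mathcal L_0$ then yields that $F$ attains its minimum $F^*$ over $\{\vx\ge\vzero\}$, that $\{F(\vx^k)\}$ is bounded below, and hence that $F(\vx^k)\downarrow\bar F$ for some $\bar F\ge F^*$.

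Next I would exploit that the decreases are summable, $\sum_{k\ge0}\big(F(\vx^k)-F(\vx^{k+1})\big)=F(\vx^0)-\bar F<\infty$, so $F(\vx^k)-F(\vx^{k+1})\to0$, and therefore $\max_{i\in[n]} D_i^k\to0$. Combining this with $D_i^k\ge\tfrac{P_{ii}}{2}(\hat x_i^k-x_i^k)^2$ and $\min_{i\in[n]}P_{ii}>0$ forces $\hat x_i^k-x_i^k\to0$ uniformly in $i$. Since $\{\vx^k\}\subseteq\mathcal L_0$ is bounded, pick a subsequence $\vx^{k_j}\to\bar\vx$; then $\bar\vx\ge\vzero$, $F(\bar\vx)=\bar F$ by continuity, and passing to the limit in \eqref{eq:xhat-ik} along $\{k_j\}$ --- using continuity of $\nabla F$ and $\hat x_i^{k_j}-x_i^{k_j}\to0$ --- gives $\bar x_i=\max\big(0,\ \bar x_i-\nabla_i F(\bar\vx)/P_{ii}\big)$ for all $i$. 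This fixed-point identity is equivalent to: $\nabla_i F(\bar\vx)=0$ whenever $\bar x_i>0$ and $\nabla_i F(\bar\vx)\ge0$ whenever $\bar x_i=0$, i.e. to $\vzero\in\nabla F(\bar\vx)+\mathcal N_+(\bar\vx)$.

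Finally, because $\vP$ is PSD the function $F$ is convex, so the stationarity condition just derived is also sufficient for global optimality (as recorded in the stopping-condition discussion), whence $\bar F=F(\bar\vx)=F^*$; since $F(\vx^k)$ is monotone with limit $\bar F$, we conclude $F(\vx^k)\to F^*$. The step I expect to be the crux is turning the fact that the selected per-iteration decrease tends to $0$ into the conclusion that no coordinate can decrease $F$ in the limit: this is precisely where the greedy selection \eqref{eq:ik-nqp} is essential, and it relies on the uniform lower bound $\min_i P_{ii}>0$ so that smallness of $D_i^k$ propagates to smallness of the candidate step $\hat x_i^k-x_i^k$ and thence, via continuity, to stationarity of the accumulation point. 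The remaining ingredients --- monotonicity, boundedness from compactness of $\mathcal L_0$, the limiting KKT argument, and the convexity that upgrades a stationary accumulation point to a global minimizer so that the objective values of the whole sequence converge to $F^*$ --- are routine.
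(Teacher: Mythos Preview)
Your proof is correct and follows the same overall strategy as the paper: monotonicity plus compactness of $\mathcal L_0$ gives a cluster point $\bar\vx$ with $F(\vx^k)\to F(\bar\vx)$, stationarity of $\bar\vx$ is established, and convexity of $F$ (since $\vP$ is PSD) upgrades this to $F(\bar\vx)=F^*$. The only difference is that the paper dispatches the stationarity step by noting that each coordinate minimizer is unique (because $P_{ii}>0$) and invoking \cite[Theorem~3.1]{chen2012maximum} as a black box, whereas you unpack that argument explicitly via summability of the per-step decreases, the bound $D_i^k\ge\tfrac{P_{ii}}{2}(\hat x_i^k-x_i^k)^2$, and the limiting fixed-point identity.
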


\begin{proof}
Since $F(\vx^k)$ is decreasing with respect to $k$ and $\mathcal{L}_0$ is compact, the sequence $\{\vx^k\}$ has a finite cluster point $\bar\vx$, and $F(\vx^k)\to F(\bar\vx)$ as $k\to \infty$. Now notice that the minimizer of $G^{(k)}_i$ in \eqref{eq:4} is unique for each $i$ since $P_{ii}>0$. Hence, it follows from \cite[Theorem 3.1]{chen2012maximum} that $\bar\vx$ must be a stationary point of \eqref{eq:nqp} and thus an optimal solution because $\vP$ is PSD. Therefore, $F(\bar\vx)=F^*$, and this completes the proof.
\end{proof}

\begin{remark}
It is not difficult to show that $\textstyle F(\vx^{k+1})-F(\vx^k)\le -\frac{P_{ii}}{2}\|\vx^{k+1}-\vx^k\|^2.$
In addition, by \cite[Theorem 18]{wang2014iteration}, the quadratic function $F$ satisfies the so-called global error bound. Hence, it is possible to show a globally linear convergence result of Algorithm~\ref{alg:gcd-nqp}. Due to the page limitation, we do not extend the detailed discussion here, but instead we will explore it in an extended version of the paper.
\end{remark}
%\textbf{Add convergence result from the literature}

\subsection{Comparison to the cyclic and randomized CD methods}
We compare the greedy CD to its cyclic and randomized counterparts and also the accelerated projected gradient method FISTA \cite{beck2009fast}. Two random NQP instances were generated. For the first one, we set $n=5,000$ and generated a symmetric PSD matrix $\vP$ and the vector $\vd$ by the normal distribution. For the second one, we set $n=1,000$, $\vP=0.1\vI+0.9 \vE$ and $\vd = -10 \ve$, where $\vI$ denotes the identity matrix, and $\vE$ and $\ve$ are all-ones matrix and vector. The $\vP$ in the second instance was used to construct a difficult unconstrained QP for the cyclic CD in \cite{sun2019worst,lee2018random}. Figure~\ref{fig:nqp} plots the objective error produced by the three different CD methods and FISTA. From the plots, we clearly see that the greedy CD performs significantly better than the other two CDs and FISTA on both instances. 

\vspace{-0.1cm}
\begin{figure}[h]
\begin{center}
\includegraphics[width=0.2\textwidth]{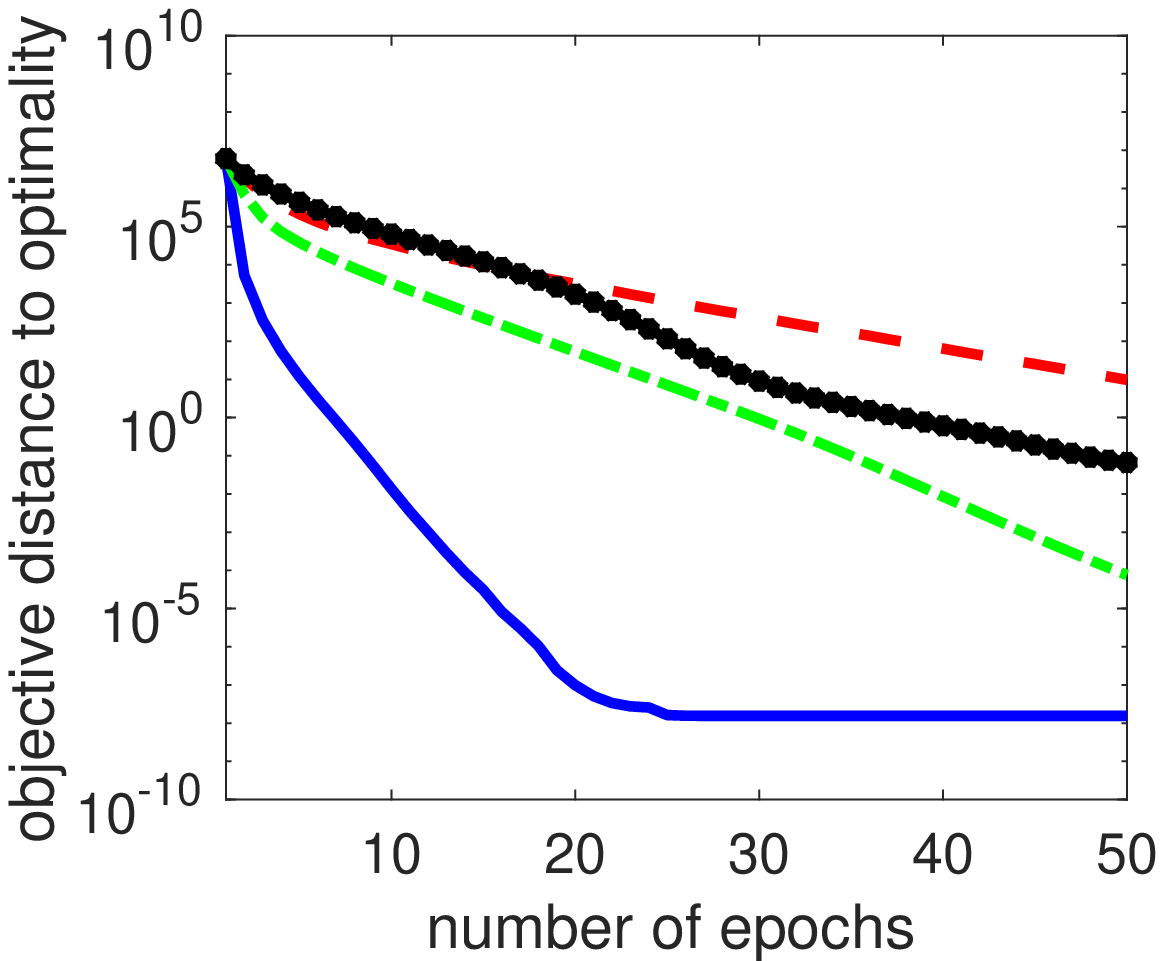}
\includegraphics[width=0.2\textwidth]{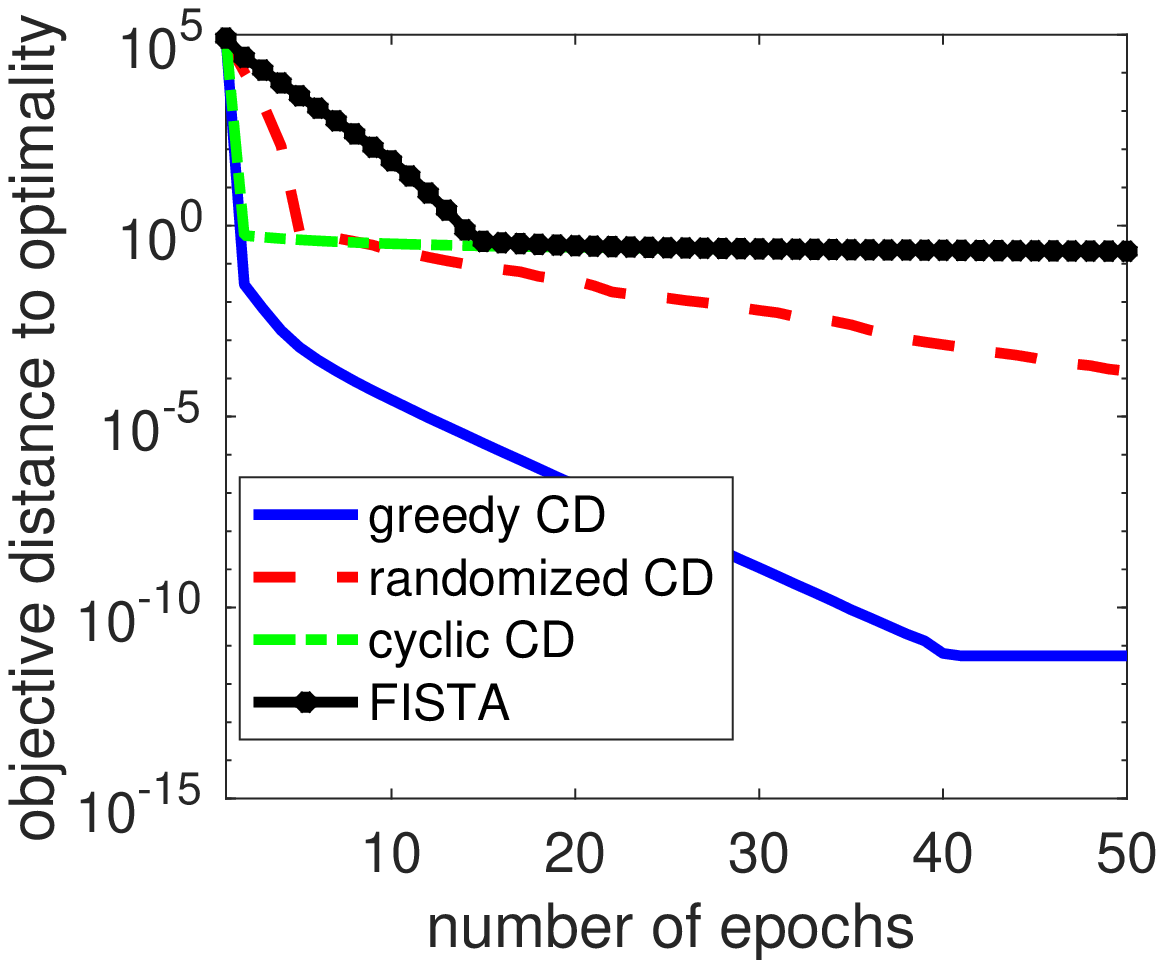}
\end{center}
\vspace{-0.3cm}
\caption{Comparison of three different CDs and FISTA on two NQP instances. Left: dimension $n=5,000$ and $\vP$ generated by normal distribution; Right: $n=1,000$ and $\vP=0.1\vI+0.9 \vE$.}\label{fig:nqp}
\vspace{-0.5cm}
\end{figure}

\iffalse
The following data was run on the 4 cores processor with 64-bit Operation System.\\
We use $randn$ command in MATLAB to randomly generate the data of $Q$ matrix, $c$ vector and initial vector $x$. For $n = 500,1000,5000$, we generate the data in this way: First we generate Gaussian Distributed random matrix with dimension $n\times n-5$. Then use its transpose times itself to get the symmetry matrix $P$. For initial guess of $x$ we generate it by non-negative Gaussian Distributed random vector with dimension $n\times 1$. For $d$, we use Gaussian Distributed random vector with dimension $n\times 1$. Here we say Gaussian Distributed is from $-1$ to $1$.

Our max iteration number is $500$, and tolerance is $10^{-5}$.
%%%%%%%%%%%%%%%%%%%%%%%%%%%%%%%%%%%%%%%
\begin{center}
\begin{tabular}{|c|c|c|c|}
\hline
\quad & n=500 & n=1000 & n=5000\\
Time & 0.029 & 0.1081 & 5.8984\\
Residual & 6.9817e-06 & 6.5118e-06 & 6.2526e-06\\
Object & -0.5401 & -0.5676 &  -0.4945\\
\hline
\end{tabular}
\end{center}

\fi
%\textbf{Compare to other existing non-negative solver}
%%%%%%%%%%%%%%%%%%%%%%%%%%%%%%%%%%%%%%%%%%%%%%%%%%%%%%%%%%%

\section{Linear equality-constrained non-negative quadratic programming}\label{sec:gcd-linqp}
In this section, we apply Algorithm~\ref{alg:gcd-nqp} as a subroutine in the inexact ALM framework to solve the NQP with linear constraints. More specifically, we consider the problem
%\subsection{Problem} 
\vspace{-0.1cm}
\begin{equation}\label{eq:lin-nqp}
\textstyle \min_{\mathbf{x}\geq \vzero} F(\mathbf{x}) = \frac{1}{2}\mathbf{x}^\top \vQ \mathbf{x}+\vc^\top \mathbf{x},\ \mathrm{s.t.}\ \vA\mathbf{x}=\vb, 
\vspace{-0.1cm}
\end{equation}
where $\vQ\in\RR^{n\times n}$ is a PSD matrix, and $\vA\in\RR^{m\times n}$. %$\vc\in\RR^n$, and $\vb\in\RR^m$ are given matrix and vectors.
The ALM \cite{powell1969method, rockafellar1973multiplier} is perhaps the most popular method for solving nonlinear functional constrained problems. Applied to \eqref{eq:lin-nqp}, it iteratively performs the updates:
\begin{subequations}\label{eq:ialm}
\begin{align}
&\textstyle \vx^{k+1} = \argmin_{\vx\geq \vzero} L_{\beta_k} (\vx,\vy^k)\label{eq:ialm-x}\\[0.1cm]
&\textstyle \vy^{k+1} = \vy^k+\beta_k(\vA\vx^{k+1}-\vb)\label{eq:ialm-y}.
\vspace{-0.2cm}
\end{align}
\end{subequations}
Here, $\vy\in\RR^m$ is the Lagrange multiplier,  and
\begin{equation}
\textstyle L_{\beta}(\vx,\vy) = F(\vx)+\vy^\top(\vA\vx-\vb)+\frac{\beta}{2}\|\vA\vx-\vb\|^2
\vspace{-0.1cm}
\end{equation}
is the AL function with a penalty parameter $\beta>0$.

\subsection{inexact ALM with greedy CD}
In the ALM update, the $\vy$-update is easy. However, the $\vx$-subproblem \eqref{eq:ialm-x} in general requires an iterative solver. 
On solving \eqref{eq:lin-nqp}, we can rewrite the AL function as
$$\textstyle L_{\beta} (\vx,\vy)=\frac{1}{2}\vx^\top (\vQ+\beta \vA^\top\vA) \vx+(\vc+\vA^\top\vy-\beta \vA^\top \vb )^\top\vx,$$
which is a quadratic function. Hence, \eqref{eq:ialm-x} is an NQP, and we propose to apply the greedy CD derived previously to solve it. %in the previous section to inexactly solve each $\vx$-subproblem. 
The pseudocode of the proposed method is shown in Algorithm~\ref{alg:ialm-nqp}. %Since the greedy CD is CF for solving the NQP, the inexact ALM with the greedy CD as the subroutine must also be CF on solving the NQP with linear constraints.

\iffalse
\begin{algorithm}\caption{inexact ALM with greedy CD for \eqref{eq:lin-nqp}}\label{alg:ialm-nqp}
\begin{algorithmic}
\STATE \textbf{Input}: a PSD matrix $\vQ\in\RR^{n\times n}$, $\vc\in\RR^n, \vA\in\RR^{m\times n}, \vb\in\RR^m$, and $\vareps>0$.%, maximum iterations $K$, 
\STATE \textbf{Initialization}: $\vx^0\in\RR^n, \vy^0=\vzero$, and $\beta_0  > 0$; set $k=0$.
\WHILE{a stopping condition not satisfied}
\STATE Let $\vP= \vQ+\beta_k \vA^\top\vA$ and $\vd = \vc+\vA^\top\vy-\beta_k \vA^\top \vb$.
\STATE Choose $\vareps_k\le \vareps$ and let $\vx^{k+1}=\mathrm{GCD}(\vP,\vd,\vareps_k,\vx^k)$.
\STATE Obtain $\vy^{k+1}$ by \eqref{eq:ialm-y}.
%\IF{$\|\vA\vx^{k+1}-\vb\|\le \vareps$}
%\STATE output $(\vx^{k+1},\vy^{k+1})$ and stop
%\ENDIF
\STATE Choose $\beta_{k+1}\ge\beta_k$, and increase $k\gets k+1$.
%\WHILE{$grad>opts.tol, iter<opts.maxit$}
%\STATE Apply $GCD(P,d,opts)$ to get $x^{new}$
%\STATE Compute: $y = y+\beta (Ax^{new}-b), grad=opt(x^{new})$
%\STATE $iter = iter+1$
%\ENDWHILE
\ENDWHILE
\end{algorithmic}
\end{algorithm}
\fi

\begin{algorithm}\caption{inexact ALM with greedy CD for \eqref{eq:lin-nqp}}\label{alg:ialm-nqp}
\DontPrintSemicolon
{\small
\textbf{Input}: a PSD matrix $\vQ\in\RR^{n\times n}$, $\vc\in\RR^n$, $\vA\in\RR^{m\times n}$, $\vb\in\RR^m$, and $\vareps>0$.\;%, maximum iterations $K$, 
\textbf{Initialization}: $\vx^0\in\RR^n, \vy^0=\vzero$, and $\beta_0  > 0$; set $k=0$.\;
\While{a stopping condition not satisfied}{
Let $\vP= \vQ+\beta_k \vA^\top\vA$ and $\vd = \vc+\vA^\top\vy-\beta_k \vA^\top \vb$.\;
Choose $\vareps_k\le \vareps$ and let $\vx^{k+1}=\mathrm{GCD}(\vP,\vd,\vareps_k,\vx^k)$.\;
Obtain $\vy^{k+1}$ by \eqref{eq:ialm-y}.\;
Choose $\beta_{k+1}\ge\beta_k$, and increase $k\gets k+1$.
}
}
\end{algorithm}
\vspace{-0.2cm}

%Notice that we have used $\vx^k$ as the initial point (i.e., with warm-start) on solving the $k$-th $\vx$-subproblem. Numerically, this setting can give faster convergence than using a random initial point. %The stopping condition in Algorithm~\ref{alg:ialm-nqp} is only based on the primal residual. However, 
%In addition, 
%Notice that we automatically have dual residual below $\vareps_k$. 
Notice that each $\vx^{k+1}$ satisfies $\mathrm{dist}\big(\vzero, \nabla_\vx L_{\beta_k}(\vx^{k+1},\vy^k)+\mathcal{N}_+(\vx^{k+1})\big)\le\vareps_k$. Hence, by the update of $\vy^{k+1}$, it holds 
$$\mathrm{dist}\left(\vzero, \nabla F(\vx^{k+1}) + \vA^\top\vy^{k+1} +\mathcal{N}_+(\vx^{k+1})\right)\le\vareps_k,$$
namely, the dual residual is always no larger than $\vareps_k$. Since $\vareps_k\le \vareps$, the output $(\vx^{k+1},\vy^{k+1})$ violates the KKT conditions at most $\vareps$ in terms of both primal and dual feasibility, if we stop the algorithm once $\|\vA\vx^{k+1}-\vb\|\le \vareps$.

\subsection{Convergence result}
%For general nonlinear functional constrained convex programs, the inexact ALM has been analyzed in \cite{rockafellar1973multiplier, xu2019ialm, li-xu2019ialm}. Hence, 
We can apply the results in \cite{rockafellar1973multiplier, xu2019ialm, li-xu2020ialm} to obtain the convergence of Algorithm~\ref{alg:ialm-nqp} based on the actual iterate.

\begin{theorem}
Suppose that for each $i\in[n]$, $Q_{ii}>0$ or $\va_i\neq \vzero$. Let $\{\vx^k\}_{k\ge 0}$ be the sequence from Algorithm~\ref{alg:ialm-nqp}. If $\vareps_k\to 0$ and $\beta_k\to\infty$, then $|F(\vx^k)-F^*| \to 0$ and $\|\vA\vx^k-\vb\|\to0$,
%$$\lim_{k\to\infty} |F(\vx^k)-F^*| = 0,\quad \lim_{k\to\infty} \vA\vx^k-\vb = \vzero,$$
where $F^*$ denotes the optimal objective value of \eqref{eq:lin-nqp}.
\end{theorem}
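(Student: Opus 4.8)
The plan is to recognize Algorithm~\ref{alg:ialm-nqp} as an instance of the inexact augmented Lagrangian method with vanishing inner tolerance and diverging penalty, and then to invoke the convergence theory of \cite{rockafellar1973multiplier, xu2019ialm, li-xu2020ialm} after checking its hypotheses. First I would confirm the subproblems are well posed: the assumption that for each $i$ either $Q_{ii}>0$ or $\va_i\neq\vzero$ makes the diagonal of $\vP=\vQ+\beta_k\vA^\top\vA$ strictly positive, $P_{ii}=Q_{ii}+\beta_k\|\va_i\|^2>0$, so each call to $\mathrm{GCD}$ is well defined; and --- as in Theorem~1, which implicitly needs a compact lower level set of the subproblem --- $\mathrm{GCD}$ drives its residual $\delta$ below $\vareps_k$ and terminates with an $\vx^{k+1}$ satisfying $\mathrm{dist}\big(\vzero,\nabla_\vx L_{\beta_k}(\vx^{k+1},\vy^k)+\mathcal{N}_+(\vx^{k+1})\big)\le\vareps_k$. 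Combined with the $\vy$-update, this is exactly the pair of inexact-KKT relations already recorded after Algorithm~\ref{alg:ialm-nqp}: $\mathrm{dist}\big(\vzero,\nabla F(\vx^{k+1})+\vA^\top\vy^{k+1}+\mathcal{N}_+(\vx^{k+1})\big)\le\vareps_k$ and $\vA\vx^{k+1}-\vb=\beta_k^{-1}(\vy^{k+1}-\vy^k)$.

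Next I would fix a primal--dual optimal pair $(\vx^*,\vy^*)$ of \eqref{eq:lin-nqp}; this exists because \eqref{eq:lin-nqp} is convex with affine constraints, so (assuming it is feasible with finite value) strong duality holds with dual attainment and the KKT conditions hold at $\vx^*$ without any constraint qualification. Write $\vr^{k+1}=\vA\vx^{k+1}-\vb$. Two estimates drive everything. First, the $\vareps_k$-inexact optimality of $\vx^{k+1}$ for the convex problem $\min_{\vx\ge\vzero}L_{\beta_k}(\vx,\vy^k)$, tested against the feasible competitor $\vx^*$ (for which $L_{\beta_k}(\vx^*,\vy^k)=F^*$), gives $L_{\beta_k}(\vx^{k+1},\vy^k)\le F^*+\vareps_k\|\vx^{k+1}-\vx^*\|$; substituting $\vy^k=\vy^{k+1}-\beta_k\vr^{k+1}$ turns this into
\begin{equation}
F(\vx^{k+1})+\langle\vy^{k+1},\vr^{k+1}\rangle\le F^*+\tfrac{\beta_k}{2}\|\vr^{k+1}\|^2+\vareps_k\|\vx^{k+1}-\vx^*\|.\label{eq:ialm-upper}
\end{equation}
Second, the saddle-point property of $(\vx^*,\vy^*)$ for the Lagrangian $F(\vx)+\langle\vy,\vA\vx-\vb\rangle$, tested at $\vx^{k+1}\ge\vzero$, gives $F(\vx^{k+1})+\langle\vy^*,\vr^{k+1}\rangle\ge F^*$. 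Subtracting, using $\vr^{k+1}=\beta_k^{-1}(\vy^{k+1}-\vy^k)$ and the polarization identity (the $\|\vy^{k+1}-\vy^k\|^2$ terms cancel), I would obtain the dual recursion
\begin{equation}
\|\vy^{k+1}-\vy^*\|^2\le\|\vy^k-\vy^*\|^2+2\beta_k\vareps_k\|\vx^{k+1}-\vx^*\|.\label{eq:ialm-dualrec}
\end{equation}

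The main obstacle is controlling \eqref{eq:ialm-dualrec}: one needs $\{\vy^k\}$ bounded, which requires the perturbation $\beta_k\vareps_k\|\vx^{k+1}-\vx^*\|$ to be summable, and this in turn needs $\{\vx^k\}$ bounded --- something that does not follow from positivity of $\mathrm{diag}(\vP)$ alone, since $F$ need not be coercive on $\RR^n_+$. Establishing primal boundedness from the QP structure and choosing $\vareps_k$ small enough relative to $\beta_k$ is exactly the bookkeeping carried out in \cite{rockafellar1973multiplier, xu2019ialm, li-xu2020ialm}, which I would cite rather than reproduce. Granting that $\{\vx^k\}$ and $\{\vy^k\}$ are bounded, the rest is routine: $\|\vA\vx^k-\vb\|=\beta_{k-1}^{-1}\|\vy^k-\vy^{k-1}\|\to0$ since the numerator is bounded and $\beta_k\to\infty$; and then, since on the right side of \eqref{eq:ialm-upper} we have $\langle\vy^{k+1},\vr^{k+1}\rangle\to0$, $\tfrac{\beta_k}{2}\|\vr^{k+1}\|^2=\tfrac{1}{2\beta_k}\|\vy^{k+1}-\vy^k\|^2\to0$, and $\vareps_k\|\vx^{k+1}-\vx^*\|\to0$, we get $\limsup_kF(\vx^k)\le F^*$, while the saddle-point inequality gives $\liminf_kF(\vx^k)\ge F^*$. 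Hence $|F(\vx^k)-F^*|\to0$ and $\|\vA\vx^k-\vb\|\to0$, as claimed.
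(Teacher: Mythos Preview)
The paper does not actually supply a proof of this theorem: it states only that ``we can apply the results in \cite{rockafellar1973multiplier, xu2019ialm, li-xu2020ialm}'' and then records the theorem with no proof environment. Your proposal is therefore strictly more detailed than the paper's treatment, and it is aligned with it in spirit --- you verify that the diagonal assumption makes each $\mathrm{GCD}$ call well posed, recast the output as an $\vareps_k$-inexact ALM step, and then invoke the same three references for the boundedness bookkeeping. The two inequalities you derive (the upper bound \eqref{eq:ialm-upper} and the dual recursion \eqref{eq:ialm-dualrec}) are the standard ALM estimates and are computed correctly.

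One point worth flagging: your recursion $\|\vy^{k+1}-\vy^*\|^2\le\|\vy^k-\vy^*\|^2+2\beta_k\vareps_k\|\vx^{k+1}-\vx^*\|$ yields boundedness of $\{\vy^k\}$ only if $\sum_k\beta_k\vareps_k\|\vx^{k+1}-\vx^*\|<\infty$, and you write ``choosing $\vareps_k$ small enough relative to $\beta_k$'' to secure this. But the theorem as stated assumes only $\vareps_k\to0$ and $\beta_k\to\infty$, with no summability linking the two, and the algorithm only enforces $\vareps_k\le\vareps$. So either the cited references supply an argument that does not go through this recursion (e.g.\ exploiting $\beta_k\to\infty$ more directly to force primal feasibility without first bounding $\{\vy^k\}$), or the theorem is implicitly leaning on stronger hypotheses than it lists --- not uncommon in a short conference paper. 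You handle this the same way the paper does, by deferring to \cite{rockafellar1973multiplier, xu2019ialm, li-xu2020ialm}, so your write-up is consistent with the paper; just be aware that the self-contained chain you sketch does not close under the stated hypotheses alone.
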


\iffalse
By using the Augmented Largrangian Methods, we can turn the problem into
Then we can see the problem decomposed into two steps. The First step is solving the non-negative problem, and the second step is renewing the vector $y$ so that we can constrain the solution in the first step linearly.\\
Then for implement, we first need to rewrite the non-negative problem with linear constraint so that it can fit in our non-negative quadratic solver.
\begin{align*}
L_{\beta} (\vx,\vy) &= \frac{1}{2}\vx^\top \vx+ c^Tx+(A^Ty)^Tx+\frac{\beta}{2} x^T A^TAx\\
&\quad -\beta (A^Tb)^Tx\\
&= \frac{1}{2}x^T (Q+\beta A^TA) x+\\
&\quad(c+(A^Ty)^T-\beta (A^T b)^T )x
\end{align*}
Then we can let $P = (Q+\beta A^T A), d = (c+(A^T y)^T-\beta (A^Tb)^T)$ so that this problem can be solved by the non-negative quadratic solver.

\subsection{Algorithm}

\subsection{Data}

\fi

\section{Non-negative matrix factorization}\label{sec:gcd-nmf}
In this section, we consider the non-negative matrix factorization (NMF). It aims to factorize a given non-negative matrix $\vM\in\RR^{m\times n}_+$ into two low-rank non-negative matrices $\vX$ and $\vY$ such that $\vM\approx \vX\vY^\top$. The factor matrix $\vX$ plays a role of basis while $\vY$ contains the coefficient. Due to the non-negativity, NMF can be used to learn local features of an objective \cite{lee1999learning} and has better interpretability than the principal component analysis (PCA). Measuring the approximation error by the Frobenius norm, one can model the NMF as
%\subsection{Problem}
\begin{equation}\label{eq:2}
\textstyle \min_{\vX, \vY} \frac{1}{2}\|\vX\vY^\top-\vM\|_F^2,\ \mathrm{s.t.}\ \vX\in\RR^{m\times r}_+, \, \vY\in\RR^{n\times r}_+, 
\end{equation}
where $\vM\in\RR^{m\times n}_+$ is given, and $r$ is a user-specified rank.

\subsection{Alternating minimization with greedy CD}
The objective of \eqref{eq:2} is non-convex jointly with respect to $\vX$ and $\vY$. However, it is convex with respect to one of $\vX$ and $\vY$ while the other is fixed. For this reason, one natural way to solve \eqref{eq:2} is the alternating minimization (AltMin), which iteratively performs the update 
%Where $M \in \mathbb R^{m\times n}$, and $r \ll \min(m,n)$. Then $X\in \mathbb R^{m\times r} $ and $ Y\in \mathbb R^{r\times n}$.
%\subsection{Deduction}
%By using the idea of Alternating Non-negative Least Squares, we can turn $\eqref{eq:2}$ into two sub-problem:
\begin{subequations}\label{eq:altm-nmf}
\begin{align}
&\textstyle\vX^{k+1}=\argmin_{\vX\geq \vzero} \frac{1}{2}\|\vX(\vY^k)^\top-\vM\|_F^2 \label{eq:altm-nmf-x},\\[0.1cm]
&\textstyle \vY^{k+1}=\argmin_{\vY\geq \vzero} \frac{1}{2}\|(\vX^{k+1})\vY^\top-\vM\|_F^2.\label{eq:altm-nmf-y}
\end{align}
\end{subequations}
Both subproblems are in the form of
$\min_{\vZ\in\RR^{r\times p}} \frac{1}{2}\|\vA\vZ-\vB\|_F^2,$
which is equivalent to solving $p$ independent NQPs
$$\textstyle \min_{\vz_i\in\RR^{r}} \frac{1}{2}\|\vA\vz_i-\vb_i\|^2, \, i\in [p].$$
Hence, we can apply the greedy CD derived in section \ref{sec:gcd-nqp} to solve the $\vX$-subproblem and $\vY$-subproblem in \eqref{eq:altm-nmf}, by breaking them respectively into $m$ and $n$ independent NQPs. The pseudocode is shown in Algorithm~\ref{alg:altm-nmf}. In Lines~4 and 8, we rescale those two factor matrices such that they have balanced norms. This way, neither of them will blow up or diminish, and the resulting NQP subproblems are relatively well-conditioned. Numerically, we observe that the rescaling technique can significantly speed up the convergence.

Notice that it is straightforward to extend our method to the non-negative tensor decomposition \cite{shashua2005non}. Due to the page limitation, we do not give the details here but leave it to an extended version of this paper.

\begin{algorithm}\caption{AltMin with greedy CD for \eqref{eq:2}}\label{alg:altm-nmf}
\DontPrintSemicolon
{\small
\textbf{Input}: $\vM \in \mathbb R^{m\times n}_+$, and rank $r$.\;
\textbf{Initialization}: $\vX^0 \in \mathbb R^{m\times r}_+$ and $\vY^0\in \mathbb R^{n\times r}_+$.\;
\For {$k=0,1,\ldots$}{
Rescale $\vX^k$ and $\vY^k$ to $\|\vx_i^k\|=\|\vy_i^k\|,\forall\, i\in[r]$.\;
Let $\vP = (\vY^k)^\top \vY^k$ and $\vD =  -(\vY^k)^\top \vM^\top$.\;
Choose $\vareps_k>0$.\;
%\For{$i = 1$ to $m$}{
Compute $\vx_{i:}^{k+1} = \mathrm{GCD}(\vP, \vd_i,\vareps_k, \vx^k_{i:})$, for $i\in[m]$.\;
Rescale $\vX^{k+1}$ and $\vY^k$ to $\|\vx_i^{k+1}\|=\|\vy_i^k\|,\forall\, i\in[r]$.\;
Let $\vP = (\vX^{k+1})^\top \vX^{k+1}$ and $\vD =  -(\vX^{k+1})^\top \vM$.\;
%\For{$i = 1$ to $n$}{
Compute $\vy_{i:}^{k+1} = \mathrm{GCD}(\vP, \vd_i,\vareps_k, \vy^k_{i:})$, for $i\in[n]$.
}
}
\end{algorithm}

\vspace{-0.4cm}

\subsection{Convergence result}
The convergence of the AltMin has been well-studied; see \cite{grippo2000convergence} for example. Although we rescale the two factor matrices and inexactly solve each subproblem, it is not difficult to adapt the existing analysis and obtain the convergence of Algorithm~\ref{alg:altm-nmf} as follows.
\begin{theorem}\label{thm:altm-nmf}
Let $\{(\vX^k,\vY^k)\}_{k\ge 0}$ be the sequence generated from Algorithm~\ref{alg:altm-nmf} with $\vareps_k\to 0$. Then any finite limit point of the sequence is a stationary point of \eqref{eq:2}.
\end{theorem}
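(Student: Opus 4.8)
The plan is to reduce the claim to a standard convergence result for two-block alternating minimization (e.g.\ \cite{grippo2000convergence}), carefully accounting for the two non-standard features of Algorithm~\ref{alg:altm-nmf}: the rescaling of the factor matrices and the inexact ($\vareps_k$-accurate) solution of each block subproblem. First I would fix notation: write $H(\vX,\vY)=\frac12\|\vX\vY^\top-\vM\|_F^2$ for the objective of \eqref{eq:2}, and let $(\vX^{k_j},\vY^{k_j})\to(\vX^*,\vY^*)$ along a subsequence. I would first observe that the rescaling in Lines~4 and 8 leaves $H$ unchanged, since replacing $(\vx_i,\vy_i)$ by $(\lambda_i\vx_i,\lambda_i^{-1}\vy_i)$ does not change $\vX\vY^\top$; moreover the rescaling keeps both factors in the non-negative orthant. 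Hence the rescaling steps can be absorbed by redefining the iterate $(\vX^k,\vY^k)$ to be the already-rescaled pair, and they do not interfere with feasibility or with the objective.

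Next I would establish monotone decrease of $H$. Each block update is an inexact NQP solve returning a point whose stationarity residual $\delta$ is at most $\vareps_k$; but since each one-dimensional subproblem in the GCD has a strongly convex (because the diagonal $P_{ii}>0$, which here equals $\|\vy_i^k\|^2>0$ or $\|\vx_i^{k+1}\|^2>0$ after rescaling — this is where I would need the balanced-norm rescaling to guarantee positive diagonals, or alternatively argue that a zero column may be discarded) quadratic, the GCD output still satisfies $H(\vX^{k+1},\vY^k)\le H(\vX^k,\vY^k)$ because Algorithm~\ref{alg:gcd-nqp} only ever decreases the objective and is warm-started at $\vx_{i:}^k$. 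Combined with the rescaling invariance, this gives $H(\vX^{k+1},\vY^{k+1})\le H(\vX^k,\vY^k)$, so $\{H(\vX^k,\vY^k)\}$ is non-increasing and, being bounded below by $0$, converges. From here one extracts, in the usual way, that $H(\vX^{k+1},\vY^k)-H(\vX^k,\vY^k)\to0$ and $H(\vX^{k+1},\vY^{k+1})-H(\vX^{k+1},\vY^k)\to0$.

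Then I would pass to the limit along the subsequence. Let $\tilde\vX$ denote the \emph{exact} minimizer of $H(\cdot,\vY^{k_j})$ over the non-negative orthant; by the strong-convexity-on-the-support argument and the fact that the GCD residual $\vareps_{k_j}\to0$, the inexact iterate $\vX^{k_j+1}$ and the exact one $\tilde\vX$ become arbitrarily close (quantitatively, the global error bound for the quadratic, or simply continuity of the argmin map on a neighborhood where the relevant diagonals stay bounded away from $0$, gives $\|\vX^{k_j+1}-\tilde\vX\|\to0$). Using the sandwich $H(\tilde\vX,\vY^{k_j})\le H(\vX^{k_j+1},\vY^{k_j})\le H(\vX^{k_j},\vY^{k_j})$ together with the limits just obtained, and continuity of $H$, one concludes $\vX^*\in\argmin_{\vX\ge\vzero}H(\vX,\vY^*)$, hence $\vzero\in\nabla_\vX H(\vX^*,\vY^*)+\mathcal N_+(\vX^*)$. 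A symmetric argument for the $\vY$-block — here one must also note that $\vX^{k_j+1}\to\vX^*$ so that the $\vY$-subproblem data converge — gives $\vzero\in\nabla_\vY H(\vX^*,\vY^*)+\mathcal N_+(\vY^*)$. Together these are exactly the KKT/stationarity conditions for \eqref{eq:2} at $(\vX^*,\vY^*)$.

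The main obstacle I anticipate is controlling the \emph{inexactness} uniformly enough to take the limit: one needs that an $\vareps_{k_j}$-stationary point of the block NQP converges to the true block minimizer, and this requires either a (local, uniform) error bound for the quadratic subproblems or an argument that the problem data stay in a compact set on which the argmin is continuous. The rescaling is what makes this clean — it keeps the diagonal entries $P_{ii}$ (equivalently $\|\vx_i\|^2$, $\|\vy_i\|^2$) from degenerating, so the subproblems remain uniformly strongly convex along the convergent subsequence, and the $O(1)$-Lipschitz dependence of the minimizer on the data then closes the gap. A minor secondary point to handle with care is the possibility of a zero column appearing in a factor (violating $P_{ii}>0$); I would note that after rescaling $\|\vx_i\|=\|\vy_i\|$, a zero column in one factor forces a zero column in the other, the corresponding rank-one term is identically zero, and that coordinate can be treated as fixed or removed without affecting stationarity of the limit.
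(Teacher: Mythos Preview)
The paper does not actually supply a proof of this theorem: it merely remarks that the result follows by adapting the standard two–block alternating-minimization analysis (citing \cite{grippo2000convergence}) to accommodate the rescaling and the inexact block solves, and then states the theorem without further argument. Your proposal is precisely such an adaptation, so in spirit it matches exactly what the paper intends; there is no alternative argument in the paper to compare against.

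One technical point in your write-up deserves tightening. You assert that the rescaling keeps the block subproblems ``uniformly strongly convex'' because the diagonals $P_{ii}=\|\vy_i^k\|^2$ (resp.\ $\|\vx_i^{k+1}\|^2$) stay positive. Positive diagonals of $\vP=(\vY^k)^\top\vY^k$ do \emph{not} imply positive definiteness; $\vY^k$ may have linearly dependent columns and the NQP subproblem can be merely convex. Hence the step ``$\vareps_{k_j}$-stationary $\Rightarrow$ close to the exact minimizer'' cannot be justified by strong convexity in general. Your alternative route via the quadratic global error bound (Hoffman/Luo--Tseng type, as the paper itself invokes in Remark~1) is the correct fix: along the convergent subsequence the data $(\vP,\vD)$ lie in a compact set, so the error-bound constant is uniform and the sandwich argument goes through. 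With that adjustment the outline is sound and fills in what the paper leaves implicit.
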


\section{Numerical experiments}\label{sec:numerical}
In this section, we test Algorithm~\ref{alg:ialm-nqp} on Gaussian random-generated instances of \eqref{eq:lin-nqp} and compare it to the MATLAB built-in solver \verb|quadprog|. Also, we test Algorithm~\ref{alg:altm-nmf} on three instances of the NMF \eqref{eq:2}, two with synthetic data and another with face image data. For the tests on \eqref{eq:lin-nqp}, we generated three different-sized instances. In Algorithm~\ref{alg:ialm-nqp}, we set $\vareps_k=10^{-3},\,\forall\, k$, for the subroutine GCD, and the algorithm was stopped once $\|\vA\vx^k-\vb\|\le \vareps$ with $\vareps=10^{-2}$ or $10^{-3}$.  For the tests on \eqref{eq:2} with synthetic data, we obtain $\vM=\vL\vR^\top$, where $\vL\in\RR^{m\times r}_+$ and $\vR\in\RR^{n\times r}_+$ were respectively generated by MATLAB's code \verb|max(0,randn(m,r))| and \verb|max(0,randn(n,r))|. One dataset was generated with $m=n=1,000, r= 50$ and the other with $m=n=5,000, r= 100$. For the other test on \eqref{eq:2}, we used the CBCL face image data \cite{hoyer2004non}, which consists of 6,977 images of size $19\times 19$. We picked the first 2,000 images and vectorized each image into a vector. This way, we formed a non-negative $\vM\in\RR^{361\times 2000}$, and we set $r=30$. In Algorithm~\ref{alg:altm-nmf}, we set $\vareps_k=10^{-3},\,\forall\, k$, for the subroutine GCD on the test with synthetic data and $\vareps_k=0.1,\,\forall\, k$, on the face image data. Different tolerances were adopted here because the image data does not admit an exact factorization.

\begin{table}\caption{Results by Algorithm~\ref{alg:ialm-nqp} on three different-sized instances of \eqref{eq:lin-nqp} and the speed comparison with MATLAB function $\mathrm{quadprog}$. Here, tol. is the $\vareps$ in Algorithm~\ref{alg:ialm-nqp}; obj. relerr is computed by $\frac{|F(\vx)-F^*|}{|F^*|}$; res. relerr is by $\frac{\|\vA\vx-\vb\|}{\|\vb\|}$; time1 is the running time (sec.) by the proposed method; time2 is by $\mathrm{quadprog}$.}\label{table:lin-nqp}
\vspace{-0.3cm}
\begin{center}
\begin{tabular}{|c||cccc||c|}
\hline
problem size & tol. & obj. relerr & res. relerr &  time1& time2\\\hline\hline
$m = 200$ & $0.01$ & 2.758e-05 & 5.192e-04 &  0.42 &  \multirow{2}{*}{0.32}\\\cline{2-5}
 $n = 1000$ & $0.001$ & 1.118e-06 & 2.811e-05 &  0.76 &  \\\hline\hline
$m = 1000$ & $0.01$ & 3.714e-06 & 1.138e-04 & 16.68 & \multirow{2}{*}{24.92}\\\cline{2-5}
 $n = 5000$ & $0.001$ & 2.257e-07 & 8.074e-06 & 30.85 & \\\hline\hline
 $m = 2000$ & $0.01$ & 4.361e-07 & 2.021e-05 & 71.90 & \multirow{2}{*}{217.41}\\\cline{2-5}
$n = 10000$ & $0.001$ & 3.795e-07 & 1.419e-05 & 133.07 & \\\hline
\end{tabular}
\end{center}
\vspace{-0.5cm}
\end{table}

\begin{figure}[h]
\begin{center}
%\begin{tabular}{ccc}
\includegraphics[width=0.15\textwidth]{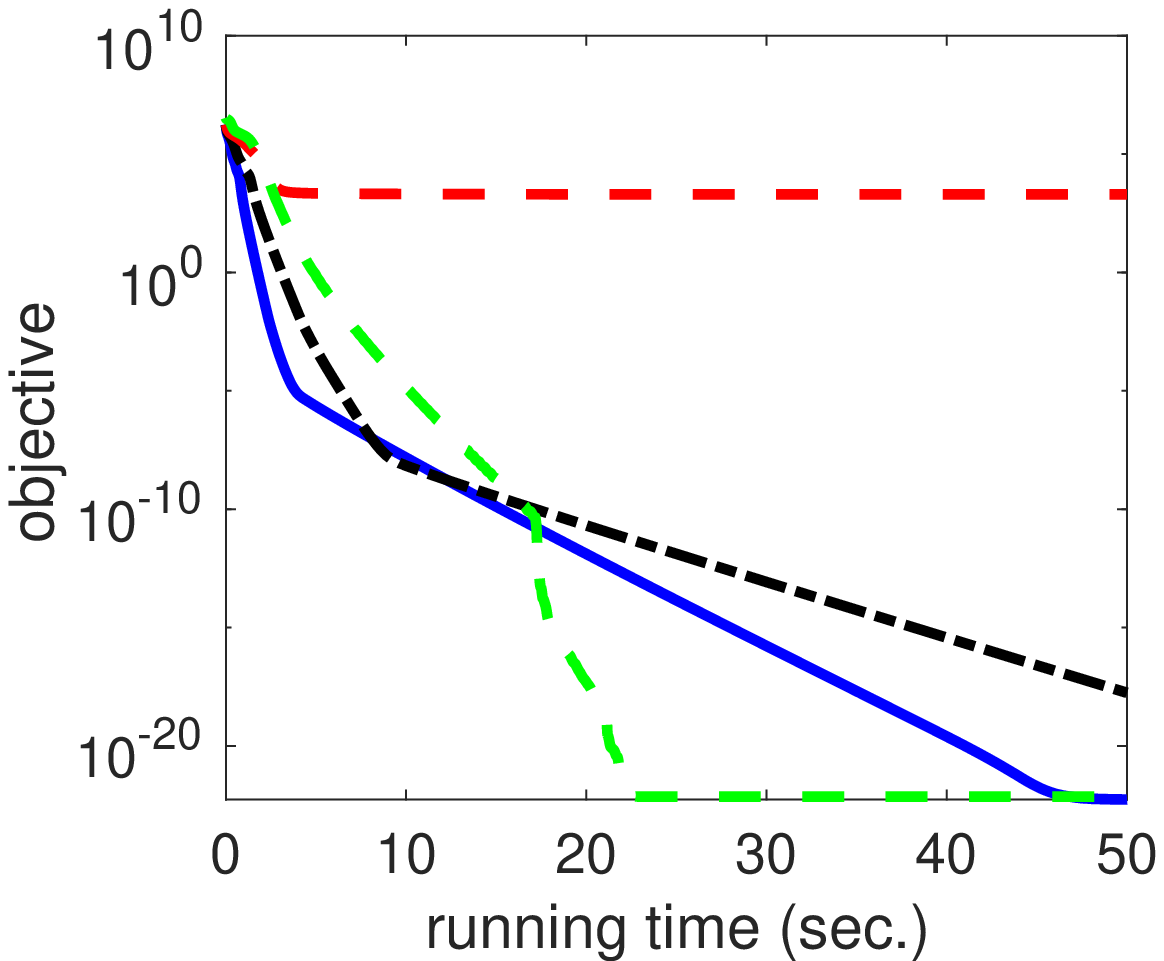}
\includegraphics[width=0.15\textwidth]{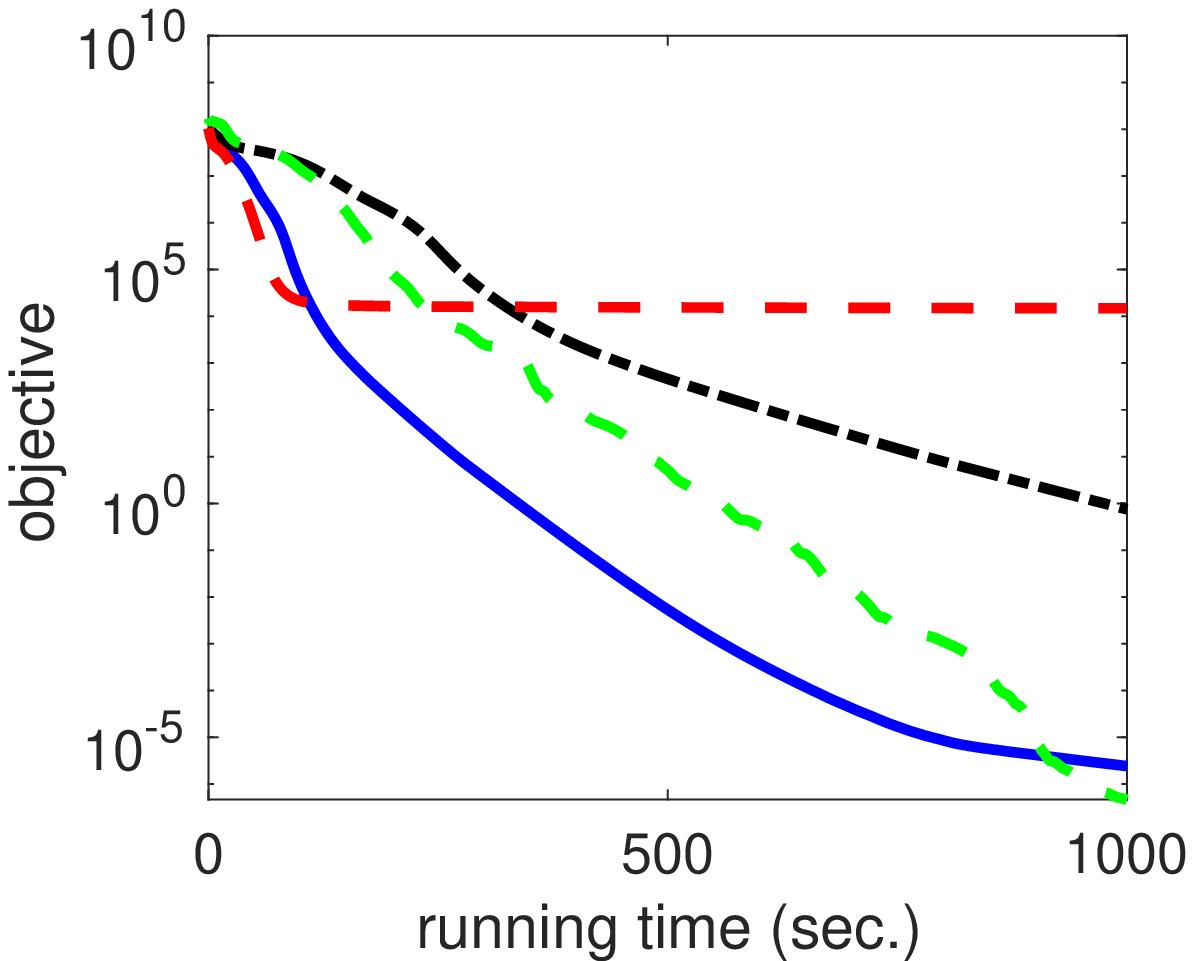}
\includegraphics[width=0.15\textwidth]{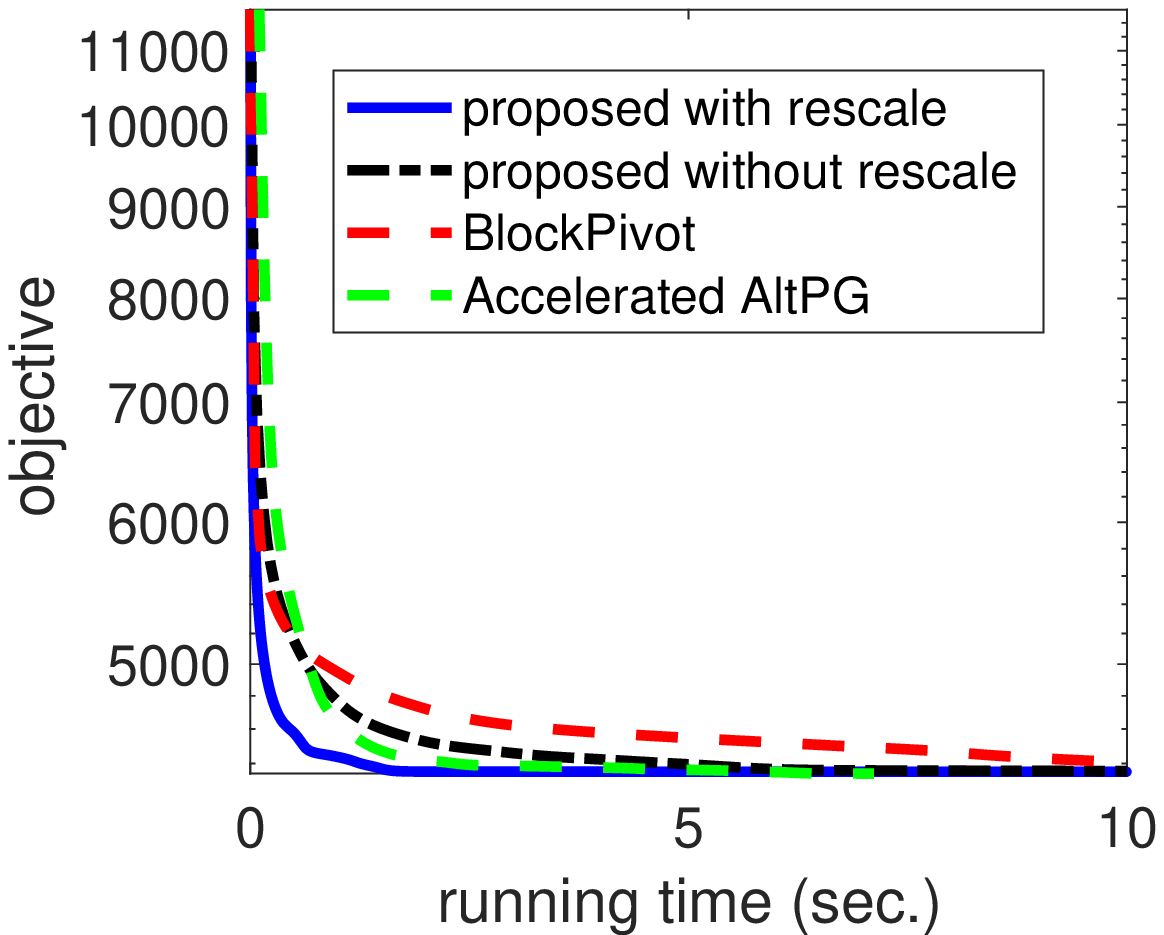}
%\end{tabular}
\end{center}
\vspace{-0.2cm}
\caption{Comparison of the proposed method (Algorithm~\ref{alg:ialm-nqp}) with and without the rescaling to the BlockPivot method in \cite{kim2008toward} and the accelerated AltPG in \cite{xu2013block}. Left: synthetic data with $m=n=1,000, r=50$; Middle: synthetic data with $m=n=5,000,r=100$; Right: CBCL face image data set}\label{fig:nmf}
\vspace{-0.2cm}
\end{figure}

The results for the tests on \eqref{eq:lin-nqp} are shown in Table~\ref{table:lin-nqp}. From the results, we see that our method can yield medium-accurate solutions. For the middle-sized instance, our method can be faster than MATLAB's solver when $\vareps=10^{-2}$, and for the large-sized instance, our method is faster under both settings of $\vareps=10^{-2}$ and $10^{-3}$. This implies that for solving large-scale linear-constrained NQP, the proposed method can outperform MATLAB's solver if a medium-accurate solution is required. The results for the tests on \eqref{eq:2} are plotted in Figure~\ref{fig:nmf}, where we compared Algorithm~\ref{alg:altm-nmf} with the BlockPivot method in \cite{kim2008toward} and the accelerated AltPG method in \cite{xu2013block}. The BlockPivot is also an AltMin, but different from our greedy CD, it uses an active-set-type method as a subroutine to exactly solve each subproblem. The accelerated AltPG performs block proximal gradient update to $X$ and $Y$ alternatingly, and it uses extrapolation technique for acceleration. From the results, we see that the proposed method performs significantly better than BlockPivot, which seems to be trapped at a local solution for the two synthetic cases. Compared to the accelerated AltPG, the proposed method can be faster in the beginning to obtain a medium accuracy. In addition, the proposed method converges faster with the rescaling than that without the rescaling on the instances with large-sized synthetic data and the face image data. That could be because the subproblems may be bad-conditioned if rescaling is not applied.

\newpage

\bibliographystyle{abbrv}
\bibliography{ref}

\end{document}